%
%
%
%
%
\RequirePackage{fix-cm}
\documentclass[smallextended]{svjour3}       
%
%

%
\usepackage{algorithmic}

%
\usepackage{array}
\usepackage{amsmath,epsfig,comment}
\numberwithin{equation}{section}
\usepackage{tcolorbox}
\usepackage{amssymb}
\smartqed  
\usepackage{graphicx}
\usepackage{amstext}
\usepackage{mathrsfs}
\usepackage{multirow}
\usepackage{threeparttable}
\usepackage{subfigure}
\usepackage[multiple]{footmisc}
\usepackage{color}

\usepackage{amssymb,amsmath,cite}
\usepackage{epsfig}
\usepackage{color}
\usepackage{bm}
\usepackage[ruled,vlined,linesnumbered]{algorithm2e}
\usepackage{graphicx}
\usepackage{subfigure}
\usepackage{algorithmic}
\usepackage{multirow}
\usepackage{cite}
\usepackage{hyperref}
\usepackage{tikz}
\usetikzlibrary{arrows.meta,positioning}
\hypersetup{
	colorlinks=true,
	linkcolor=black,
	filecolor=magenta,
	urlcolor=cyan,
	citecolor=blue
}
\usepackage{extarrows}

\makeatletter
\def\@thmcountersep{.}
\makeatother
\newtheorem{assumption}{Assumption}[section]{\bfseries}{\normalfont}
\spnewtheorem{prop}{Proposition}[section]{\bfseries}{\normalfont}
\spnewtheorem{lem}{Lemma}[section]{\bfseries}{\normalfont}
\spnewtheorem{theo}{Theorem}[section]{\bfseries}{\normalfont}
\DeclareMathOperator*{\argmin}{arg\,min}

\usepackage{float}
\usepackage{booktabs}
\usepackage{multirow}

\DeclareMathAlphabet\mathbfcal{OMS}{cmsy}{b}{n}

\newcommand{\rgrad}{\mathrm{grad}\,}

\newcommand{\prox}{\mathrm{prox}}
\newcommand{\proj}{\mathrm{proj}}
\hyphenation{op-tical net-works semi-conduc-tor}
%





\begin{document}
%
	\title{On the Oracle Complexity of a Riemannian Inexact Augmented Lagrangian Method for Riemannian Nonsmooth Composite Problems
	}
	
	\titlerunning{Oracle Complexity of an RiAL Method for Riemannian Nonsmooth Problems}        
	
	\author{Meng Xu       \and
Bo Jiang \and
Ya-Feng Liu \and Anthony Man-Cho So
}


\institute{Meng Xu\at
LSEC, ICMSEC, Academy of Mathematics and Systems Science, Chinese Academy of Sciences, and University of Chinese Academy of Sciences, Beijing, China. \\
\email{xumeng22@mails.ucas.ac.cn}           
\and Bo Jiang \at
Ministry of Education Key Laboratory of NSLSCS, School of Mathematical Sciences, Nanjing Normal University, Nanjing, China.\\ \email{jiangbo@njnu.edu.cn}
\and Ya-Feng Liu\at
LSEC, ICMSEC, Academy of Mathematics and Systems Science, Chinese Academy of Sciences, Beijing, China. \\ \email{yafliu@lsec.cc.ac.cn}
\and Anthony Man-Cho So \at Department of Systems Engineering and Engineering Managment, The Chinese University of Hong Kong, HKSAR, China.\\
\email{ manchoso@se.cuhk.edu.hk}\\
}
\date{Received: date / Accepted: date}

\maketitle
\begin{abstract}
In this paper, we establish for the first time the oracle complexity of a Riemannian inexact augmented Lagrangian (RiAL) method with the classical dual update for solving a class of Riemannian nonsmooth composite problems. By using the Riemannian gradient descent method with a specified stopping criterion for solving the inner subproblem, we show that the RiAL method can find an $\varepsilon$-stationary point of the considered problem with $\mathcal{O}(\varepsilon^{-3})$ calls to the first-order oracle. This achieves the best oracle complexity known to date. Numerical results demonstrate that the use of the classical dual stepsize is crucial to the high efficiency of the RiAL method. 
\end{abstract}

\begin{keywords}
{Riemannian augmented Lagrangian method, Riemannian nonsmooth optimization, first-order oracle complexity}
\end{keywords}

%

\section{Introduction}\label{sec:introduction}
In this paper, we consider the following Riemannian nonsmooth composite problem:
\begin{equation}\label{prob:Riemannian nonsmooth composite}
	\min_{x\in\mathcal{M}}  \, \left\{ \Phi(x):=f(x)+h(\mathcal{A}(x))\right\},
\end{equation}
where $\mathcal{M}$ is a Riemannian submanifold embedded in a finite-dimensional Euclidean space $\mathcal{E}_1$, 
 $f:\mathcal{E}_1\to\mathbb{R}$ is a continuously differentiable function, $\mathcal{A}:\mathcal{E}_1\to\mathcal{E}_2$ is a smooth mapping with $\mathcal{E}_2$ being another finite-dimensional Euclidean space, and $h:\mathcal{E}_2\to \mathbb{R}$ is a convex Lipschitz continuous function with a tractable proximal mapping. Many problems in machine learning and signal processing can be formulated as problem \eqref{prob:Riemannian nonsmooth composite}, such as sparse principal component analysis (PCA) \cite{zou2018selective,jolliffe2003modified}, fair PCA \cite{samadi2018price,zalcberg2021fair,xu2023efficient2}, sparse canonical correlation analysis (CCA) \cite{deng2024oracle,chen2019alternating,hardoon2011sparse}, and beamforming design \cite{liu2024survey}. A variety of algorithms can be applied to tackle problem \eqref{prob:Riemannian nonsmooth composite}, including  Riemannian subgradient-type methods \cite{borckmans2014riemannian,hosseini2017riemannian,hosseini2018line,li2021weakly,hu2023constraint}, Riemannian proximal gradient-type methods \cite{chen2020proximal,huang2022riemannian,huang2023inexact,wang2022manifold,chen2024nonsmooth}, Riemannian smoothing-type algorithms \cite{beck2023dynamic,peng2023riemannian,zhang2023riemannian}, splitting-type methods \cite{lai2014splitting,kovnatsky2016madmm,deng2023manifold,li2023riemannian,zhou2023semismooth,deng2024oracle}, and Riemannian min-max algorithms \cite{xu2023efficient2,xu2024riemannian}. Among the previously mentioned algorithms, the Riemannian augmented Lagrangian (AL) method has demonstrated advantages in addressing the general mapping $\mathcal{A}$ along with possible additional constraints in problem \eqref{prob:Riemannian nonsmooth composite} \cite{zhou2023semismooth}. In this paper, we focus on the Riemannian AL method for solving problem \eqref{prob:Riemannian nonsmooth composite}.

As a powerful algorithmic framework for constrained problems, the AL method has been 
extensively studied since 1960s \cite{hestenes1969multiplier,powell1969method}.  At each iteration, it updates the primal variable by (approximately) minimizing the AL function followed by a dual (gradient ascent) step to update the dual variable. 
Recently, the AL method has been generalized to tackle optimization problems with Riemannian manifold constraints (e.g., problem \eqref{prob:Riemannian nonsmooth composite}), resulting in various efficient Riemannian AL methods. Below, we briefly introduce several such algorithms, which maintain the manifold constraint within the subproblem when solving problem \eqref{prob:Riemannian nonsmooth composite}. For some recent advances in AL methods for solving general nonsmooth nonconvex optimization problems, one can refer to \cite{li2021rate,sahin2019inexact,kong2023iteration,dahal2023damped} and the references therein. 

When $\mathcal{A}$ is a linear mapping, 
Deng and Peng \cite{deng2023manifold}, and Deng et al. \cite{deng2024oracle} proposed two types of Riemannian inexact AL methods for solving problem \eqref{prob:Riemannian nonsmooth composite} with a compact manifold $\mathcal{M}$. The first one has asymptotic convergence, while the second achieves the best-known first-order oracle complexity of $\mathcal{O}(\varepsilon^{-3})$ to attain an $\varepsilon$-stationary point. To establish the convergence or complexity results, additional requirements are imposed on the dual updates therein. Specifically, in \cite{deng2023manifold}, the dual update is followed by a projection onto a specified compact set at each iteration, in order to ensure the boundedness of the (Lagrange) multiplier sequence. In contrast, the work \cite{deng2024oracle} employed a damped technique to satisfy the same boundedness requirement for the multiplier sequence.  
However, as observed in \cite{kong2023iteration}, such damped dual stepsizes may slow down the convergence of AL-like methods.  
When $\mathcal{A}$ is a general nonlinear mapping, Zhou et al. \cite{zhou2023semismooth} proposed a manifold-based AL (MAL) method with classical dual updates and established its global convergence.  However, the oracle complexity of their approach remains unclear.
In summary, the oracle complexity of  the Riemannian AL method with the classical dual update for solving problem \eqref{prob:Riemannian nonsmooth composite} involving a general nonlinear mapping $\mathcal{A}$ remains unknown. 
Given this background, \textit{we are motivated to investigate the oracle complexity of such method
for solving problem \eqref{prob:Riemannian nonsmooth composite}.} 

\begin{table}[t]
	\centering
	\renewcommand{\arraystretch}{1.2}
	\tabcolsep=0.15cm
	\caption{Comparison of state-of-the-art Riemannian AL methods for solving problem \eqref{prob:Riemannian nonsmooth composite}.} 
		\begin{tabular}{c|c|c|c}
			\hline
			{Algorithm} & {Mapping $\mathcal{A}$}  & Dual Stepsize & Complexity\\ 
			\hline
			MIAL \cite{deng2023manifold}&linear &classical but with projection& ---\\ 
			\hline
			MAL \cite{zhou2023semismooth} & nonlinear  &classical& --- \\
			\hline
			ManIAL \cite{deng2024oracle}&linear & damped & $\mathcal{O}(\varepsilon^{-3})$ \\
			\hline
			RiAL~(this paper) & nonlinear  & classical & $\mathcal{O}(\varepsilon^{-3})$ \\
			\hline \end{tabular}
		\label{tab: complexity comparision}
	\end{table}

In this paper, we propose a Riemannian inexact AL (RiAL) method, where each subproblem is solved to a specified accuracy using the Riemannian gradient descent (RGD) method. We establish its first-order oracle complexity of $\mathcal{O}(\varepsilon^{-3})$ for finding an $\varepsilon$-stationary point of problem \eqref{prob:Riemannian nonsmooth composite}. 
As seen from Table \ref{tab: complexity comparision}, which summarizes the applicability and complexity of different Riemannian AL methods for solving problem \eqref{prob:Riemannian nonsmooth composite}, our proposed approach is able to tackle more general problem settings and achieves the best-known oracle complexity result.
Additionally, we present numerical results on sparse PCA and sparse CCA to demonstrate that the proposed RiAL method outperforms the ManIAL method in terms of computational efficiency. This suggests that the damped dual stepsize used in ManIAL slows down the convergence of the algorithm.

\section{Notation and Preliminaries}\label{sec:preliminaries}
We begin by introducing the notation and some concepts in Riemannian optimization \cite{absil2008optimization,boumal2023introduction}. 
Let $\langle \,\!\cdot\,, \cdot\rangle$ and $\| \cdot \| $ denote the standard inner product and its induced norm on the Euclidean space $\mathcal{E}$, respectively. 
Let $\mathcal{M}$ be a Riemannian submanifold embedded in $\mathcal{E}$ and $ \mathrm{T}_{x}\mathcal{M} $ denote the tangent space to $\mathcal{M}$ at $x\in\mathcal{M}$.
Throughout this paper, the Riemannian metric on $\mathcal{M}$ is induced from the standard Euclidean product. 
The Riemannian gradient of a smooth function $f: \mathcal{E} \rightarrow \mathbb{R}$ at a point $x \in \mathcal{M}$ is given by
$\rgrad f(x)=\proj_{\mathrm{T}_{x}\mathcal{M}}(\nabla f(x))$, where $\nabla f(x)$ is the Euclidean gradient of $f$ at $x$ and $\proj_{\mathrm{T}_x \mathcal{M}}(\cdot)$ is the Euclidean projection operator onto $\mathrm{T}_x \mathcal{M}$. 
A retraction at $x \in \mathcal{M}$ is a smooth mapping $\mathrm{R}_x: \mathrm{T}_x \mathcal{M} \to \mathcal{M}$ satisfying (i) $\mathrm{R}_x(\mathbf{0}_x) = x$, where $\mathbf{0}_x$ is the zero element in $\mathrm{T}_x \mathcal{M}$; (ii) $\frac{\mathrm{d}}{\mathrm{d} t} \mathrm{R}_x (t v)|_{t = 0} = v$ for all $v \in \mathrm{T}_x \mathcal{M}$. Without loss of generality, we assume that the retraction $\mathrm{R}_x$ is globally defined over $\mathrm{T}_x \mathcal{M}$.

Next, we introduce some necessary notions in convex analysis \cite{rockafellar2009variational,beck2017first}. 
For a subset $\mathcal{X}$ in $\mathcal{E}$, we use $\mathrm{conv}\,\mathcal{X}$ to denote the convex hull of $\mathcal{X}$.
Let $h:\mathcal{E}\to \mathbb{R}$ be a convex function. 
For a given constant $\lambda > 0$, the proximal mapping and the Moreau envelope of $h$ are defined as 
\begin{equation*}
	\prox_{ \lambda h}(x)=\argmin_{u\in\mathcal{E}}\,\left\{h(u)+\frac{1}{2\lambda}\|u-x\|^2\right\},
\end{equation*} 
and
\begin{equation*}
	M_{{\lambda h}}(x)=\min_{u\in\mathcal{E}}\,\left\{h(u)+\frac{1}{2\lambda}\|u-x\|^2\right\},
\end{equation*}
respectively.
The following theorem characterizes several useful properties related to the subgradient and the Moreau envelope.
\begin{theo}\label{gradient of Moreau}\textbf{(\hspace{-0.01cm}\cite[Theorems  3.61, 6.39, and 6.67]{beck2017first})}
	Let $h:\mathcal{E}\to\mathbb{R}$ be a  convex $L_h$-Lipschitz continuous function and $h^*$ be its conjugate function. Then, $\|g\|\leq L_h$ for any $g\in\partial h(x)$ and $ x\in\mathcal{E}$. Moreover, for a given constant $\lambda>0$ and any $x\in\mathcal{E}$, 
  \begin{equation*}
 M_{\lambda h}(x) = \max_{z \in \mathcal{E}}\left\{ \langle x, z\rangle -h^*(z) - \frac{\lambda}{2} \|z\|^2\right\}
 \end{equation*}
 and
	\begin{equation*}\label{gradient of Moreau envelope}
		\nabla M_{\lambda h}(x)=\frac{1}{\lambda}(x-\prox_{\lambda h}(x))\in\partial h(\prox_{\lambda h}(x)).
	\end{equation*}
\end{theo}

\section{The RiAL Method and Its Oracle Complexity}
In this section, we introduce the RiAL method for solving problem \eqref{prob:Riemannian nonsmooth composite} and establish its first-order oracle complexity.
\subsection{The RiAL Method}
The key challenge in solving problem \eqref{prob:Riemannian nonsmooth composite} stems from the presence of both a manifold constraint and a nonsmooth objective function. To tackle this, existing Riemannian AL methods split these two difficulties. 
Specifically, by introducing an auxiliary variable $y$, problem \eqref{prob:Riemannian nonsmooth composite} can be  equivalently reformulated as 
\begin{equation}\label{prob: eq prob for ALM}
	\min_{x\in\mathcal{M},\,y\in\mathcal{E}_2} f(x) +h(y) \quad \mathrm{s.t.} \quad \mathcal{A}(x) - y = 0.
\end{equation}
The augmented Lagrangian function associated with problem \eqref{prob: eq prob for ALM} is defined as
\begin{equation*}\label{equ: augmented Lagrangian}
	\mathcal{L}_\sigma(x, y; z):= f(x)+h(y)+\langle z,\,\mathcal{A}(x)- y\rangle+\frac{\sigma}{2}\|\mathcal{A}(x)- y\|^2,
\end{equation*}
where $z$ is the Lagrange multiplier (also called dual variable) corresponding to the constraint $\mathcal{A}(x) - y = 0$ and $\sigma > 0$ is the penalty parameter. 
At the $k$-th iteration, an ordinary Riemannian AL method updates the next point as
\begin{equation*}\label{prob: RALM subprob}
	(x_{k+1},y_{k+1})\approx\argmin_{x \in \mathcal{M},\,y\in\mathcal{E}_2} \mathcal{L}_{\sigma_k}(x, y;z_k).
\end{equation*}
Observe that for any fixed $x\in \mathcal{M}$, the optimal $y$ in the above minimization problem can be expressed as $y^* = \mathrm{prox}_{h/\sigma_k}(\mathcal{A}(x) + z_k/\sigma_k)$. Similar to the Riemannian AL methods in \cite{deng2023manifold,deng2024oracle,zhou2023semismooth},  we update $x_{k+1}$ and $y_{k+1}$ via the following scheme: 
\begin{subequations}
  \begin{align}%
x_{k+1}&\approx\argmin_{x\in\mathcal{M}}\,\mathcal{L}_k(x),\label{RALM subproblem}\\
	y_{k+1}&=\prox_{h/\sigma_k}\left(\mathcal{A}(x_{k+1})+\frac{z_k}{\sigma_k}\right).\label{equ: update y}
	\end{align}  
\end{subequations}
Here, 
\begin{equation}\label{equ: L_k}
\mathcal{L}_k(x):=\min_{y\in\mathcal{E}_2}\,\mathcal{L}_{\sigma_{k}}(x,y;z_k)=f(x)+M_{{h/\sigma_k}}\left(\mathcal{A}(x)+\frac{{z_k}}{\sigma_{k}}\right).
\end{equation}
Let $\nabla \mathcal{A}^\top (x)$ be the adjoint mapping of $\nabla \mathcal{A}(x)$. We know from Theorem \ref{gradient of Moreau} that $\mathcal{L}_k$ is differentiable and 
\begin{equation}\label{equ: gradient of L_k}
	\begin{aligned}
		\nabla \mathcal{L}_k(x) & = \nabla f(x) + \sigma_k \nabla\mathcal{A}(x)^\top\left(\mathcal{A}(x)+\frac{z_k}{\sigma_k} - \mbox{prox}_{ h/\sigma_k}\left(\mathcal{A}(x)+\frac{z_k}{\sigma_k} \right)\right).
	\end{aligned}
\end{equation}
Therefore, we propose to use the RGD method to solve subproblem \eqref{RALM subproblem}. Specifically, starting from an initial point $x_{k,1} = x_{k}$, the iteration of RGD for $ t\geq1 $ is given by
\begin{equation}\label{equ:RGD}
	x_{k,t+1}=\mathrm{R}_{x_{k,t}}(-\zeta_{k,t}\rgrad\mathcal{L}_k(x_{k,t})),
\end{equation}
where $\zeta_{k,t}>0$ is the stepsize determined later in 
Theorem \ref{Th: I}.
The RiAL method with RGD (RiAL-RGD) for solving problem \eqref{prob:Riemannian nonsmooth composite} is formally presented in Algorithm \ref{Algorithm RiAL}.
\begin{algorithm}[t]\label{Algorithm RiAL}
	\fontsize{10pt}{\baselineskip}\selectfont
	\caption{RiAL-RGD for solving problem \eqref{prob:Riemannian nonsmooth composite}}
	Input $x_1\in\mathcal{M}$, $y_1=z_1=\mathbf{0}$, $\varepsilon_1, \sigma_1>0$, $b>1$. 
	
	\For{$k=1,\,2,\,\ldots$}
	{Apply the RGD method \eqref{equ:RGD} with $x_{k,1} = x_{k}$ for $t = 1, 2, \ldots$ until 
	\begin{equation}\label{ineq: subproblem stop}
		\|\rgrad \mathcal{L}_k(x_{k, t_k})\|\leq \varepsilon_k
	\end{equation}
		holds for some positive integer $t_k$ and set $x_{k+1} = x_{k, {t_k}}$.   
		
		Update the auxiliary variable via \eqref{equ: update y}.
		
		Update the dual variable:
		\begin{equation}\label{equ: update z}
			z_{k+1}=z_k+\sigma_k(\mathcal{A}(x_{k+1})-y_{k+1}).
		\end{equation}
		
		Set
		\begin{equation}\label{equ: sigma_k and epsilon_k}
\sigma_{k+1}= b \sigma_k  \quad\text{and}\quad\varepsilon_{k+1}=\varepsilon_k/b.
		\end{equation}
		
	}
\end{algorithm}

\subsection{Oracle Complexity}
In this subsection, we establish the first-order oracle complexity of RiAL-RGD. Before presenting our results, we first introduce the definitions of the first-order oracle and a commonly used notion of $\varepsilon$-stationary point of problem \eqref{prob:Riemannian nonsmooth composite} (see, e.g., \cite{deng2024oracle,li2023riemannian,xu2024riemannian}).
\begin{definition}
	For problem \eqref{prob:Riemannian nonsmooth composite}, given $x\in\mathcal{M}$ and $y\in\mathcal{E}_2$, the first-order oracle returns  $f(x)$,  $\nabla f(x)$,  $\mathcal{A}(x)$, $\nabla \mathcal{A}(x)$, and $\prox_{ \lambda h}(y)$ for any $\lambda>0$.
\end{definition}
\begin{definition}\label{def:epsilon-deter}
	For any given $\varepsilon>0$, we say that $x\in\mathcal{M}$ is an $\varepsilon$-stationary point of problem \eqref{prob:Riemannian nonsmooth composite} if there exist $y \in \mathcal{E}_2$ and $z \in \partial h(y)$ such that 
	\begin{equation*}\label{eq:epsi-kkt}
 \max\left\{\left\|\proj_{\mathrm{T}_x\mathcal{M}}\left(\nabla f(x) +\nabla \mathcal{A}(x)^\top z \right) \right\|,\,\| \mathcal{A}(x) - y\|\right\}\leq\varepsilon.
	\end{equation*}
\end{definition}

We make the following assumptions for our analysis (see, e.g., \cite{boumal2019global,chen2020proximal,xu2024riemannian,zhou2023semismooth}).
\begin{assumption}\label{assumption: compact level set}
	The level set 
	\begin{equation*}\label{Omega}
		\Omega_{x_1} := \{x \in \mathcal{M} \mid \Phi(x) \leq \Phi(x_1)+\Upsilon\}
	\end{equation*}
	is compact, where $\Phi$ is defined in \eqref{prob:Riemannian nonsmooth composite}, $x_1$ is the initial point of Algorithm \ref{Algorithm RiAL}, and
	\begin{equation}\label{equ:Upsilon}
		\Upsilon:=\sum_{k = 1}^{+\infty}\frac{1}{\sigma_{k}}=\frac{b}{\sigma_1(b-1)}.
	\end{equation}
\end{assumption}
\begin{assumption}\label{assumption: bounded below}
	The function $\Phi$ defined in \eqref{prob:Riemannian nonsmooth composite} is bounded from below on $\mathcal{M}$, namely, $\Phi^*:=\inf_{x\in\mathcal{M}}\Phi(x)>-\infty$.
\end{assumption}
\begin{assumption}\label{assumption1}
	The functions $f$ and $h$ and the smooth mapping $\mathcal{A}$ satisfy the following conditions:
	\begin{itemize}
		\item[(i)] The function $f: \mathcal{E}_1 \to \mathbb{R}$ is continuously differentiable and satisfies the descent property over $\mathcal{M}$ and $h$ is $L_h$-continuous, i.e., 
		\begin{align}
			f(x') \leq f(x) + \langle \nabla f(x),\,x' - x\rangle + \frac{L_f}{2} \|x' - x\|^2, \quad \forall\,x,x' \in \mathcal{M},\\
			\|h(x)-h(x')\|\leq L_h\|x-x'\|,\quad\forall\,x,x'\in\mathcal{E}_1.\label{L_h}
		\end{align}
		\item[(ii)] The mapping $\mathcal{A}: \mathcal{E}_1 \to \mathcal{E}_2$ and its Jacobian mapping $\nabla \mathcal{A}$ are $L_{\mathcal{A}}^0$-Lipschitz and $L_{\mathcal{A}}^1$-Lipschitz continuous over $\mathrm{conv}\,\mathcal{M}$, respectively. 
		In other words, for any $x,x' \in \mathrm{conv}\,\mathcal{M}$, there hold that
		\begin{subequations}
			\begin{align}
				\| \mathcal{A}(x) - \mathcal{A}(x') \| \leq L_{\mathcal{A}}^0 \|x - x'\|,\label{L_A^0}\\
				\| \nabla \mathcal{A}(x) - \nabla \mathcal{A}(x') \| \leq L_{\mathcal{A}}^1 \|x - x'\|\label{L_A^1}. 
			\end{align}
		\end{subequations}
		 \item[(iii)] The Jacobian mapping $\nabla\mathcal{A}$ is bounded over $\mathrm{conv}\,\mathcal{M}$, i.e.,
		 \begin{equation}\label{rhoA}
		 	{\rho_\mathcal{A}}:=\max_{x \in\mathrm{conv}\mathcal{M}} \|\nabla \mathcal{A}(x)\| <+\infty.
		 \end{equation}
	\end{itemize}
\end{assumption}
The following lemma, which is extracted from \cite[Appendix B]{boumal2019global}, shows that the retraction satisfies the first- and second-order boundedness conditions.
\begin{lem}\label{lemma Bound retraction}
	Suppose that Assumption \ref{assumption: compact level set} holds. Then, there exist constants  $\alpha_1, \alpha_2>0$ such that 
	\begin{equation*}\label{bound retraction}
		\|\mathrm{R}_x(v)-x\|\leq\alpha_1\|v\| \quad \text{{and}} \quad \|\mathrm{R}_x(v)-x-v\|\leq\alpha_2\|v\|^2
	\end{equation*} 
	for any $x\in\Omega_{x_1}$ and $v\in\mathrm{T}_{x}\mathcal{M}$.
\end{lem}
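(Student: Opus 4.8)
\emph{Proof plan.} The plan is to derive both estimates from a second-order Taylor expansion of the $\mathcal{E}$-valued curve $\gamma(s):=\mathrm{R}_x(sv)$, $s\in[0,1]$, and then to make the constants independent of the base point $x$ via a compactness argument resting on Assumption \ref{assumption: compact level set}. Fix $x\in\Omega_{x_1}$ and $v\in\mathrm{T}_x\mathcal{M}$. Viewing $\mathrm{R}_x$ as a smooth $\mathcal{E}$-valued map, $\gamma$ is a smooth curve with $\gamma(0)=\mathrm{R}_x(\mathbf{0}_x)=x$ and, by the second defining property of a retraction, $\gamma'(0)=v$; by the chain rule $\gamma'(s)=\mathrm{D}\mathrm{R}_x(sv)[v]$ and $\gamma''(s)=\mathrm{D}^2\mathrm{R}_x(sv)[v,v]$, where $\mathrm{D}\mathrm{R}_x(w)$ and $\mathrm{D}^2\mathrm{R}_x(w)$ are the first and second differentials of $w\mapsto\mathrm{R}_x(w)$. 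Then $\mathrm{R}_x(v)-x=\int_0^1\gamma'(s)\,\mathrm{d}s$ and, by Taylor's formula with integral remainder, $\mathrm{R}_x(v)-x-v=\int_0^1(1-s)\,\gamma''(s)\,\mathrm{d}s$; bounding the integrands by the operator norms of the (multi)linear maps gives
\[
\|\mathrm{R}_x(v)-x\|\le\Big(\sup_{s\in[0,1]}\|\mathrm{D}\mathrm{R}_x(sv)\|\Big)\|v\|
\quad\text{and}\quad
\|\mathrm{R}_x(v)-x-v\|\le\tfrac12\Big(\sup_{s\in[0,1]}\|\mathrm{D}^2\mathrm{R}_x(sv)\|\Big)\|v\|^2 .
\]

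Next I would convert the two suprema into absolute constants. For $\|v\|\le 1$, every pair $(x,sv)$ with $s\in[0,1]$ lies in $\mathcal{K}:=\{(x,w): x\in\Omega_{x_1},\ w\in\mathrm{T}_x\mathcal{M},\ \|w\|\le 1\}$, which is a closed and bounded --- hence compact --- subset of $\mathcal{E}\times\mathcal{E}$: here I use that $\Omega_{x_1}$ is compact by Assumption \ref{assumption: compact level set} and that membership in $\mathrm{T}_x\mathcal{M}$ is a closed condition since $\mathcal{M}$ is embedded. Because $\mathrm{R}$ is smooth on the tangent bundle, $(x,w)\mapsto\|\mathrm{D}\mathrm{R}_x(w)\|$ and $(x,w)\mapsto\|\mathrm{D}^2\mathrm{R}_x(w)\|$ are continuous and therefore attain finite maxima on $\mathcal{K}$; denoting these by $\alpha_1$ and $2\alpha_2$ proves both inequalities (uniformly in $x\in\Omega_{x_1}$) whenever $\|v\|\le 1$. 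For $\|v\|>1$ the estimates follow after enlarging the constants: in this paper the lemma is only invoked along the trajectory of Algorithm \ref{Algorithm RiAL}, for which the relevant tangent vectors $v=-\zeta_{k,t}\rgrad\mathcal{L}_k(x_{k,t})$ have norm bounded by an a priori constant $D$, so it suffices to replace $\mathcal{K}$ by $\{(x,w):x\in\Omega_{x_1},\ \|w\|\le D\}$ and rerun the argument; alternatively, when $\mathcal{M}$ is compact one bounds $\|\mathrm{R}_x(v)-x\|$ and $\|\mathrm{R}_x(v)-x-v\|$ crudely by $\mathrm{diam}\,\mathcal{M}$ and $\mathrm{diam}\,\mathcal{M}+\|v\|$.

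The step I expect to need the most care is exactly this uniformization --- ensuring $\alpha_1,\alpha_2$ do not depend on $x$ (which is where compactness of the level set $\Omega_{x_1}$, not of $\mathcal{M}$, is used) and handling tangent vectors of large norm. The Taylor expansion and the use of the retraction axioms $\mathrm{R}_x(\mathbf{0}_x)=x$ and $\mathrm{D}\mathrm{R}_x(\mathbf{0}_x)=\mathrm{id}$ are routine. Overall this is the specialization, from a compact manifold to the compact set $\Omega_{x_1}$, of the argument in \cite[Appendix B]{boumal2019global}.
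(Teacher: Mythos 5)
Your proposal is correct and follows essentially the same route as the paper, which proves this lemma simply by invoking the Taylor-expansion-plus-compactness argument of \cite[Appendix B]{boumal2019global} specialized from a compact manifold to the compact level set $\Omega_{x_1}$. Your explicit treatment of the large-$\|v\|$ case (restricting to the bounded step vectors actually produced by Algorithm \ref{Algorithm RiAL}, or falling back on compactness of $\mathcal{M}$) is a sensible patch of a point the stated lemma glosses over, since with only $\Omega_{x_1}$ compact the bounds cannot hold uniformly for all $v\in\mathrm{T}_x\mathcal{M}$ without some such restriction.
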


We next establish the descent property of $\mathcal{L}_k$ in \eqref{equ: L_k} as follows.
\begin{lem}\label{lemma l-smooth }
	Suppose that Assumptions \ref{assumption: compact level set} and \ref{assumption1} hold. Then, the function $\mathcal{L}_k$ in \eqref{equ: L_k} satisfies the following properties: 
	\begin{itemize}
		\item[(i)] {\bf Euclidean Descent.} For any $x,x' \in \mathcal{M}$, we have
		\begin{equation}\label{lemma l-smooth E ineq E}
			{\mathcal{L}_k}(x') \leq {\mathcal{L}_k}(x) + \langle \nabla {\mathcal{L}_k}(x),\,x' - x \rangle + \frac{\ell_k}{2} \|x' - x\|^2, 
		\end{equation}
		where $\ell_k= L_f+L_hL_{\mathcal{A}}^1+\sigma_{k}\rho_\mathcal{A}L_\mathcal{A}^0.$
		\vspace{5pt}
		\item[(ii)] {\bf Riemannian Descent.} For any $x\in\Omega_{x_1}$ and $v\in\mathrm{T}_{x}\mathcal{M}$, 
		we have
		\begin{equation}\label{lemma l-smooth E ineq R}
			{\mathcal{L}_k}(\mathrm{R}_x(v))\leq {\mathcal{L}_k}(x)+\langle \rgrad\, {\mathcal{L}_k}(x),\,v\rangle + \frac{L_k(x)}{2}\|v\|^2,
		\end{equation} 
		where 	
		\begin{equation} \label{equ:Lk(x)} 
			L_k(x) = \ell_k\alpha_1^2  + 2\left(\|\nabla f(x)\|+\rho_\mathcal{A} L_h\right)\alpha_2.
		\end{equation}
	\end{itemize}
\end{lem}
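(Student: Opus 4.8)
The plan is to prove part (i) by establishing that the gradient $\nabla\mathcal{L}_k$ is Lipschitz continuous (with the stated constant) along any segment joining two points of $\mathcal{M}$, keeping the contribution of $f$ separate since it is only assumed to enjoy a descent property, and then to obtain part (ii) by pulling this Euclidean estimate back through the retraction.

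For part (i), I would start from the gradient formula \eqref{equ: gradient of L_k}, which by Theorem \ref{gradient of Moreau} reads
\begin{equation*}
  \nabla\mathcal{L}_k(x) = \nabla f(x) + \nabla\mathcal{A}(x)^\top \nabla M_{h/\sigma_k}\!\left(\mathcal{A}(x)+\tfrac{z_k}{\sigma_k}\right),
\end{equation*}
where $\nabla M_{h/\sigma_k}(w)\in\partial h\big(\prox_{h/\sigma_k}(w)\big)$, so $\|\nabla M_{h/\sigma_k}(w)\|\le L_h$ for all $w$, and $\nabla M_{h/\sigma_k}$ is $\sigma_k$-Lipschitz (the Moreau envelope $M_{\lambda h}$ has $\tfrac1\lambda$-Lipschitz gradient, which follows from $\nabla M_{\lambda h}(x)=\tfrac1\lambda(x-\prox_{\lambda h}(x))$ and the firm nonexpansiveness of $\prox_{\lambda h}$). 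Treating $f$ on its own, I would bound the variation of the gradient of the composite term $x\mapsto M_{h/\sigma_k}(\mathcal{A}(x)+z_k/\sigma_k)$ between $x,x'\in\mathrm{conv}\,\mathcal{M}$ by adding and subtracting $\nabla\mathcal{A}(x')^\top\nabla M_{h/\sigma_k}(\mathcal{A}(x)+z_k/\sigma_k)$:
\begin{align*}
  &\big\|\nabla\mathcal{A}(x)^\top\nabla M_{h/\sigma_k}(\mathcal{A}(x)+\tfrac{z_k}{\sigma_k}) - \nabla\mathcal{A}(x')^\top\nabla M_{h/\sigma_k}(\mathcal{A}(x')+\tfrac{z_k}{\sigma_k})\big\| \\
  &\qquad \le \|\nabla\mathcal{A}(x)-\nabla\mathcal{A}(x')\|\,L_h + \rho_\mathcal{A}\,\sigma_k\,\|\mathcal{A}(x)-\mathcal{A}(x')\| \le \big(L_hL_{\mathcal{A}}^1 + \sigma_k\rho_\mathcal{A}L_{\mathcal{A}}^0\big)\|x-x'\|,
\end{align*}
where I use \eqref{L_A^1}, $\|\nabla M_{h/\sigma_k}\|\le L_h$ for the first summand and \eqref{rhoA}, the $\sigma_k$-Lipschitzness of $\nabla M_{h/\sigma_k}$, and \eqref{L_A^0} for the second. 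Applying the standard descent lemma to this composite term along the segment $[x,x']\subset\mathrm{conv}\,\mathcal{M}$ and adding the descent inequality for $f$ from Assumption \ref{assumption1}(i) yields \eqref{lemma l-smooth E ineq E} with $\ell_k=L_f+L_hL_{\mathcal{A}}^1+\sigma_k\rho_\mathcal{A}L_{\mathcal{A}}^0$.

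For part (ii), I would fix $x\in\Omega_{x_1}$ and $v\in\mathrm{T}_x\mathcal{M}$ and invoke \eqref{lemma l-smooth E ineq E} with $x'=\mathrm{R}_x(v)\in\mathcal{M}$, then decompose $\mathrm{R}_x(v)-x = v + \big(\mathrm{R}_x(v)-x-v\big)$. Since $v\in\mathrm{T}_x\mathcal{M}$, we have $\langle\nabla\mathcal{L}_k(x),v\rangle=\langle\proj_{\mathrm{T}_x\mathcal{M}}\nabla\mathcal{L}_k(x),v\rangle=\langle\rgrad\mathcal{L}_k(x),v\rangle$, while the residual cross term obeys $\langle\nabla\mathcal{L}_k(x),\mathrm{R}_x(v)-x-v\rangle\le \|\nabla\mathcal{L}_k(x)\|\cdot\|\mathrm{R}_x(v)-x-v\|\le \big(\|\nabla f(x)\|+\rho_\mathcal{A}L_h\big)\alpha_2\|v\|^2$, using Lemma \ref{lemma Bound retraction} together with the bound $\|\nabla\mathcal{L}_k(x)\|\le\|\nabla f(x)\|+\rho_\mathcal{A}L_h$ that follows from the gradient formula above (via $\|\nabla\mathcal{A}(x)\|\le\rho_\mathcal{A}$ and $\|\nabla M_{h/\sigma_k}\|\le L_h$). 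The quadratic term is bounded by $\tfrac{\ell_k}{2}\|\mathrm{R}_x(v)-x\|^2\le\tfrac{\ell_k}{2}\alpha_1^2\|v\|^2$, again by Lemma \ref{lemma Bound retraction}. Collecting the three estimates gives \eqref{lemma l-smooth E ineq R} with $L_k(x)$ exactly as in \eqref{equ:Lk(x)}.

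I expect the only delicate point to be the gradient-Lipschitz estimate for the Moreau-composite term in part (i): one must be careful that $f$ is not assumed to have a Lipschitz gradient—only the descent property over $\mathcal{M}$—so it cannot be folded into the Lipschitz-constant bookkeeping and must be carried along separately, and one must correctly track the factor $\sigma_k$ coming from the smoothness parameter $1/\sigma_k$ of the Moreau envelope through the chain rule with $\nabla\mathcal{A}$. Everything else reduces to the known regularity of the Moreau envelope, the descent lemma on the convex hull $\mathrm{conv}\,\mathcal{M}$, and the standard retraction-pullback argument underlying Lemma \ref{lemma Bound retraction}.
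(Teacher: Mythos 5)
Your proposal is correct and follows essentially the same route as the paper: the same splitting of $\mathcal{L}_k$ into $f$ plus the Moreau-composite term, the same add-and-subtract estimate using $\|\nabla M_{h/\sigma_k}\|\leq L_h$, firm nonexpansiveness of the proximal operator, \eqref{L_A^0}, \eqref{L_A^1}, and \eqref{rhoA} to get the Lipschitz constant $L_hL_{\mathcal{A}}^1+\sigma_k\rho_\mathcal{A}L_{\mathcal{A}}^0$, followed by the descent lemma on $\mathrm{conv}\,\mathcal{M}$ and the assumed descent property of $f$. For part (ii) the paper simply invokes the argument of \cite[Appendix B]{boumal2019global}, and your retraction-pullback computation is exactly that argument written out, yielding the constant \eqref{equ:Lk(x)}.
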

\begin{proof}
	The proof is similar to that in \cite[Lemma 4.2]{xu2024riemannian}. We include the proof here for completeness.
	For simplicity of notation, denote 
	\begin{align}
		\mathcal{B}_k(x)&:=\sigma_{k}\mathcal{A}(x)+z_k-\sigma_{k}\prox_{h/\sigma_k}\left(\mathcal{A}(x)+\frac{z_k}{\sigma_{k}}\right),\nonumber\\
		\psi_k(x)&:= M_{h/\sigma_k} \left(\mathcal{A}(x) + \frac{z_k}{\sigma_{k}}\right). \label{equ: psi_k}
	\end{align}
	From Theorem \ref{gradient of Moreau}, we know that
	\begin{equation*}
		\nabla \psi_k(x) =  \nabla\mathcal{A}(x)^\top \mathcal{B}(x) \quad\text{and}\quad\mathcal{B}_k(x)\in\partial h\left(\prox_{h/\sigma_k}\left(\mathcal{A}(x)+\frac{z_k}{\sigma_{k}}\right)\right).
	\end{equation*}
	Since $h$ is $L_h$-Lipschitz continuous as assumed in \eqref{L_h}, from Theorem \ref{gradient of Moreau}, we have
	\begin{equation}\label{inequ: B <=Lh}
		\|\mathcal{B}_k(x)\|\leq L_h,\quad \forall\, x\in\mathcal{M},\,k\geq1.
	\end{equation}
	
	We now show that $\nabla\psi_k$ is Lipschitz continuous. Specifically, for any $x,x'\in\mathrm{conv}\,\mathcal{M}$, it holds that
	\begin{align*}
		&\left\|\nabla \psi_k(x)-\nabla \psi_k(x')\right\|\\
		\leq{}&\| (\nabla\mathcal{A}(x)-\nabla\mathcal{A}(x'))^\top \mathcal{B}_k(x)\|+\| \nabla\mathcal{A}(x')^\top (\mathcal{B}_k(x)-\mathcal{B}_k(x'))\|\\
		\overset{(\text{a})}{\leq}{}& L_\mathcal{A}^1\|\mathcal{B}_k(x)\|\cdot\|x-x'\|+\rho_\mathcal{A}\|\mathcal{B}_k(x)-\mathcal{B}_k(x')\|\\
		\overset{(\text{b})}{\leq}{}& L_\mathcal{A}^1\|\mathcal{B}_k(x)\|\cdot\|x-x'\|+\rho_\mathcal{A}\sigma_{k}\|\mathcal{A}(x)-\mathcal{A}(x')\|\\
		\overset{(\text{c})}{\leq}{}& \left(L_hL_\mathcal{A}^1+\sigma_{k}\rho_\mathcal{A}L_\mathcal{A}^0\right)\|x-x'\|,
	\end{align*}
	where (a) is due to \eqref{L_A^1} and \eqref{rhoA}, (b) is due to the firm nonexpansiveness of the proximal operator (see \cite[Theorem 6.42]{beck2017first}), and (c) is due to \eqref{L_A^0} and \eqref{inequ: B <=Lh}.
	Hence,  for any $x, x' \in \mathrm{conv} \mathcal{M}$, it follows from \cite[Lemma 1.2.3]{nesterov2018lectures} that
	\[
	\psi_k(x') \leq \psi_k(x)+\langle\nabla\psi_k(x), x'-x\rangle+\frac{L_hL_\mathcal{A}^1+\sigma_{k}\rho_\mathcal{A}L_\mathcal{A}^0}{2}\|x-x'\|^2.
	\]
	This, together with the definition of $\mathcal{L}_k$ in \eqref{equ: L_k} and the definition of $\psi_k$ in \eqref{equ: psi_k}, 
	implies the desired Euclidean descent property in \eqref{lemma l-smooth E ineq E} over $\mathcal{M}$.  
	
	Moreover, following the similar analysis in \cite[Appendix B]{boumal2019global}, we know that $\mathcal{L}_k$ in \eqref{equ: L_k} also satisfies the Riemannian descent property in \eqref{lemma l-smooth E ineq R}.
\end{proof}

Next, we present some important inequalities related to $\mathcal{L}_k$.
\begin{lem}
	Suppose that Assumption \ref{assumption1} holds. Then, for any $x\in\mathcal{M}$ and $k\geq1$, $\mathcal{L}_k$ defined in \eqref{equ: L_k} satisfies
	\begin{align}
		\mathcal{L}_k(x)&\geq\Phi^*{-\frac{2L_h^2}{\sigma_1}},\label{inequ: Lk>Phi*}\\
		\mathcal{L}_k(x)&\leq\Phi(x)+\frac{L_h^2}{\sigma_{k}},\label{inequ: L_k<Phi}\\
		\mathcal{L}_{k+1}(x)&\leq \mathcal{L}_k(x)+\frac{2L_h^2}{\sigma_{k}}\label{inequ: Lk+1<Lk+c}.
	\end{align}
\end{lem}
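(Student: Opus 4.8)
The plan is to sandwich $\mathcal{L}_k(x)$ between $\Phi(x)$ up to an $\mathcal{O}(L_h^2/\sigma_k)$ error and then read off all three inequalities. Concretely, I would first establish the uniform dual bound $\|z_k\|\le L_h$ for every $k\ge1$ by induction: $z_1=\mathbf{0}$ satisfies it, and for the inductive step, combining the dual update \eqref{equ: update z} with the definition \eqref{equ: update y} of $y_{k+1}$ and the Moreau-gradient identity of Theorem \ref{gradient of Moreau} shows $z_{k+1}=\nabla M_{h/\sigma_k}\big(\mathcal{A}(x_{k+1})+z_k/\sigma_k\big)\in\partial h(y_{k+1})$, so $\|z_{k+1}\|\le L_h$ by the Lipschitz part of Theorem \ref{gradient of Moreau}. (This uses only Assumption \ref{assumption1}, matching the lemma's hypotheses.)

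Next I would prove the sandwich
\[
\Phi(x) - \frac{3L_h^2}{2\sigma_k}\;\le\;\mathcal{L}_k(x)\;\le\;\Phi(x)+\frac{L_h^2}{2\sigma_k}\qquad\text{for all }x\in\mathcal{M},\ k\ge1.
\]
For the upper bound, plug the feasible point $u=\mathcal{A}(x)$ into the minimization defining $M_{h/\sigma_k}\big(\mathcal{A}(x)+z_k/\sigma_k\big)$, which gives $M_{h/\sigma_k}\big(\mathcal{A}(x)+z_k/\sigma_k\big)\le h(\mathcal{A}(x))+\frac{\sigma_k}{2}\|z_k/\sigma_k\|^2\le h(\mathcal{A}(x))+\frac{L_h^2}{2\sigma_k}$; then add $f(x)$. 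For the lower bound, first record the elementary fact that $M_{\lambda h}(w)\ge h(w)-\frac{\lambda L_h^2}{2}$ for any convex $L_h$-Lipschitz $h$ and any $w$ (insert $h(u)\ge h(w)-L_h\|u-w\|$ into the minimization and minimize the resulting scalar quadratic in $\|u-w\|$); apply it with $\lambda=1/\sigma_k$ and $w=\mathcal{A}(x)+z_k/\sigma_k$, then use $L_h$-Lipschitzness of $h$ and $\|z_k\|\le L_h$ to pass from $h(w)$ to $h(\mathcal{A}(x))-L_h^2/\sigma_k$, obtaining $M_{h/\sigma_k}\big(\mathcal{A}(x)+z_k/\sigma_k\big)\ge h(\mathcal{A}(x))-\frac{3L_h^2}{2\sigma_k}$; add $f(x)$.

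The three claimed inequalities then follow. Inequality \eqref{inequ: Lk>Phi*} comes from the lower bound together with $\Phi(x)\ge\Phi^*$ and $\sigma_k\ge\sigma_1$, using $\frac32\le2$; inequality \eqref{inequ: L_k<Phi} is the upper bound weakened via $\frac12\le1$; and \eqref{inequ: Lk+1<Lk+c} follows by applying the upper bound at index $k+1$ and the lower bound at index $k$:
\[
\mathcal{L}_{k+1}(x)\le\Phi(x)+\frac{L_h^2}{2\sigma_{k+1}}\le\Phi(x)+\frac{L_h^2}{2\sigma_k}\le\mathcal{L}_k(x)+\frac{3L_h^2}{2\sigma_k}+\frac{L_h^2}{2\sigma_k}=\mathcal{L}_k(x)+\frac{2L_h^2}{\sigma_k},
\]
where $\sigma_{k+1}\ge\sigma_k$ is used in the middle step.

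I do not anticipate a genuine obstacle: every estimate above is one or two lines. The only point that requires a moment's thought is that one should not try to compare $\mathcal{L}_{k+1}$ and $\mathcal{L}_k$ directly---both the argument of the Moreau envelope and the smoothing parameter change between iterations---but instead route the comparison through $\Phi(x)$ via the sandwich, and notice that the residual $\frac{L_h^2}{2\sigma_{k+1}}$ is dominated by $\frac{L_h^2}{2\sigma_k}$, so the constant works out to exactly $2L_h^2/\sigma_k$.
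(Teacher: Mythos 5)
Your proposal is correct, and for the first two inequalities it is essentially the paper's argument: the same identification $z_{k+1}\in\partial h(y_{k+1})$ from \eqref{equ: update y}, \eqref{equ: update z} and Theorem \ref{gradient of Moreau} yields $\|z_k\|\leq L_h$, and the two-sided envelope bounds you derive by hand (plugging $u=\mathcal{A}(x)$ for the upper bound, and minimizing the quadratic in $\|u-w\|$ for the lower bound) are exactly the sandwich the paper obtains by citing a Moreau-envelope estimate, giving the same $\Phi(x)-\tfrac{3L_h^2}{2\sigma_k}$ lower bound; note that, like the paper, your use of $\Phi(x)\geq\Phi^*$ for \eqref{inequ: Lk>Phi*} implicitly invokes Assumption \ref{assumption: bounded below}. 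Where you genuinely diverge is \eqref{inequ: Lk+1<Lk+c}: the paper compares $\mathcal{L}_{k+1}(x)$ and $\mathcal{L}_k(x)$ directly through the conjugate (maximization) representation of the Moreau envelope, obtaining a Lipschitz-type estimate $M_{\lambda_1 h}(w)-M_{\lambda_2 h}(w')\leq L_h\|w-w'\|+\tfrac{\lambda_2-\lambda_1}{2}L_h^2$ that handles the simultaneous change of argument and smoothing parameter, whereas you route the comparison through $\Phi(x)$ using your sandwich. Your route is more elementary (no conjugate representation needed), but it only closes at exactly $2L_h^2/\sigma_k$ because you sharpened the upper bound to $\Phi(x)+\tfrac{L_h^2}{2\sigma_{k+1}}$; had you used the weaker constant actually stated in \eqref{inequ: L_k<Phi}, the chain would give $\tfrac{5L_h^2}{2\sigma_k}$ when $b$ is close to $1$, so that sharpening is an essential, and correctly executed, ingredient of your argument. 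The paper's conjugate-based estimate, by contrast, is insensitive to this and also quantifies the dependence on $\|z_{k+1}/\sigma_{k+1}-z_k/\sigma_k\|$, which is the more reusable tool, but for the lemma as stated both proofs are complete and valid (your chain also correctly uses $\sigma_{k+1}=b\sigma_k\geq\sigma_k$ from \eqref{equ: sigma_k and epsilon_k}).
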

\begin{proof}
	By the optimality of $y_{k+1}$ in \eqref{equ: update y}, we have
	\begin{equation*}
		z_k +  \sigma_k(\mathcal{A}(x_{k+1}) - y_{k+1}) \in  \partial h(y_{k+1}),
	\end{equation*}
	which, together with \eqref{equ: update z}, implies  
	\begin{equation}\label{eq:y-epsilonk}
		z_{k+1} \in \partial h(y_{k+1}).
	\end{equation}
	Since $h$ is $L_h$-Lipschitz continuous as assumed in \eqref{L_h}, by using Theorem \ref{gradient of Moreau} and noting that $z_1 = \mathbf{0}$, we have
	\begin{equation}\label{equ:new}
		\| z_{k}\|  \leq L_h, \quad \forall\,k \geq 1. 
	\end{equation}
By \eqref{equ:new}, the $L_h$-Lipschitz continuity of $h$, the definition of $\Phi$ in \eqref{prob:Riemannian nonsmooth composite}, and \cite[Theorem 10.51]{beck2017first}, for any $x \in \mathcal{M}$, we have the following inequalities:  
		\begin{equation}\label{ineq: M<f<M+:2}
				\mathcal{L}_k(x) \geq f(x) + h\left(\mathcal{A}(x)+ \frac{z_k}{\sigma_{k}}\right)  - \frac{L_h^2}{2 \sigma_k} \geq \Phi(x) - \frac{3L_h^2}{2\sigma_k}
			\end{equation}
   and 
   \begin{equation}\label{ineq: M<f<M+}
				\mathcal{L}_k(x) \leq f(x) + h\left(\mathcal{A}(x)+ \frac{z_k}{\sigma_{k}}\right) \leq \Phi(x) + \frac{L_h^2}{\sigma_k}.
			\end{equation}
   With the update of  $\sigma_k$ in \eqref{equ: sigma_k and epsilon_k} and the definition of $\Phi^*$ in Assumption \ref{assumption: bounded below}, we immediately obtain \eqref{inequ: Lk>Phi*} and \eqref{inequ: L_k<Phi} from \eqref{ineq: M<f<M+:2} and \eqref{ineq: M<f<M+}, respectively. 

It remains to prove \eqref{inequ: Lk+1<Lk+c}. First, we show that, for any $\lambda_1, \lambda_2 > 0$ and $w,w'\in\mathcal{E}$, we have
	\begin{equation}
 \label{equ:M:lips:general}
 		\begin{aligned}
			&M_{\lambda_1h}(w)-M_{\lambda_2h}(w')\\
	\overset{\text{(a)}}{=}{}&\max_{z\in\mathcal{E}}\left\{\langle w,z\rangle-h^*(z)-\frac{\lambda_1}{2}\|z\|^2\right\}-\max_{z\in\mathcal{E}}\left\{\langle w',z\rangle-h^*(z)-\frac{\lambda_2}{2}\|z\|^2\right\}\\
	\overset{\text{(b)}}{\leq}{}&\langle w,z^*\rangle-h^*(z^*)-\frac{\lambda_1}{2}\|z^*\|^2-\langle w',z^*\rangle+h^*(z^*)+\frac{\lambda_2}{2}\|z^*\|^2\\
	={}&\langle w-w',z^*\rangle+\frac{\lambda_2-\lambda_1}{2}\|z^*\|^2 
	\overset{(c)}{\leq}  L_h\|w-w'\|+\frac{\lambda_2-\lambda_1}{2}L_h^2,
		\end{aligned}
	\end{equation}
	where (a) follows from Theorem \ref{gradient of Moreau}, (b) holds by the optimality of the maximization problems and $z^*:=\prox_{h^*/\lambda_1}(w/\lambda_1)$, and  (c) comes from the fact $\|z^*\|\leq L_h$, as shown in \cite[Theorem 4.23]{beck2017first}.
	Based on the two ends of \eqref{equ:M:lips:general}, 
 and noting $\sigma_{k+1} = b \sigma_k$ with $b> 1$ as in \eqref{equ: sigma_k and epsilon_k}, we have
	\[ 
		\begin{aligned}
			\mathcal{L}_{k+1}(x)-\mathcal{L}_k(x)={}&M_{h/\sigma_{k+1}}\left(\mathcal{A}(x)+\frac{z_{k+1}}{\sigma_{k+1}}\right)-M_{h/\sigma_{k}}\left(\mathcal{A}(x)+\frac{z_{k}}{\sigma_{k}}\right)\\
			\leq{}&\frac{1}{2}\left(\frac{1}{\sigma_{k}}-\frac{1}{\sigma_{k+1}}\right)L_h^2+L_h\left\|\frac{z_{k+1}}{\sigma_{k+1}}-\frac{z_{k}}{\sigma_{k}}\right\|\\
			\leq{}&\frac{1}{2}\left(\frac{1}{\sigma_{k}}-\frac{1}{\sigma_{k+1}}\right)L_h^2+\frac{L_h^2}{\sigma_{k+1}}+\frac{L_h^2}{\sigma_{k}}\leq\frac{2L_h^2}{\sigma_{k}},
		\end{aligned}
	\]
 which is the desired \eqref{inequ: Lk+1<Lk+c}. 
\end{proof}

Denote
\begin{equation}
\Delta :=\mathcal{L}_1(x_1)+2L_h^2\Upsilon-\Phi^*+\frac{2L_h^2}{\sigma_1}\label{equ: Delta}	
\end{equation}
and 
\[
	c_1:={\rho_\mathcal{A}L_\mathcal{A}^0\alpha_1^2},\quad  
	c_2:={(L_f+L_h L_\mathcal{A}^1)\alpha_1^2}+2\left(\max_{x \in \Omega_{x_1}}\|\nabla f(x)\|+\rho_\mathcal{A} L_h\right)\alpha_2.
\]
Recalling the definition of $L_k(x)$ in \eqref{equ:Lk(x)}, we derive a universal upper bound for $L_k(x_{k_,t})$ as   
\begin{equation}\label{equ:Lk}
L_k := \max_{x \in \Omega_{x_1}}L_k(x) = c_1 \sigma_k + c_2.  
\end{equation}

With the above results at hand, we start characterizing the inner iteration complexity of Algorithm \ref{Algorithm RiAL}.
\begin{lem}\label{lem: inner complexity}
	Suppose that Assumptions \ref{assumption: compact level set}, \ref{assumption: bounded below}, and \ref{assumption1} hold. Then, the inner RGD of Algorithm \ref{Algorithm RiAL} with $\zeta_{k,t}=1/L_k(x_{k,t})$ stops within at most $\left\lceil {2L_k\Delta}/{\varepsilon_k^{2}}\right\rceil$
	 iterations.
\end{lem}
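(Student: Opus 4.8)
The plan is to run a descent-plus-telescoping argument along the inner RGD trajectory, after first confining that trajectory to the compact level set $\Omega_{x_1}$, which is the only region where the Riemannian descent inequality \eqref{lemma l-smooth E ineq R} and the uniform smoothness bound $L_k$ of \eqref{equ:Lk} are available.

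First I would substitute $v=-\zeta_{k,t}\,\rgrad\mathcal{L}_k(x_{k,t})$ with $\zeta_{k,t}=1/L_k(x_{k,t})$ into \eqref{lemma l-smooth E ineq R}, which gives, whenever $x_{k,t}\in\Omega_{x_1}$,
\[
\mathcal{L}_k(x_{k,t+1})\le\mathcal{L}_k(x_{k,t})-\frac{1}{2L_k(x_{k,t})}\|\rgrad\mathcal{L}_k(x_{k,t})\|^2\le\mathcal{L}_k(x_{k,t}),
\]
so that $\mathcal{L}_k$ is nonincreasing along the inner loop. The confinement itself I would establish by a nested induction over the stage index $k$ and the inner index $t$: assuming $x_k=x_{k,1}\in\Omega_{x_1}$, the monotonicity above yields $\mathcal{L}_k(x_{k,t})\le\mathcal{L}_k(x_k)$; telescoping the cross-stage bound \eqref{inequ: Lk+1<Lk+c} back to the first stage (using $x_{k+1}=x_{k,t_k}$), together with \eqref{inequ: L_k<Phi} at $x_1$ and $\sum_{j\ge1}1/\sigma_j=\Upsilon$ from \eqref{equ:Upsilon}, bounds $\mathcal{L}_k(x_k)$, hence $\mathcal{L}_k(x_{k,t})$, in terms of $\Phi(x_1)$; then the comparison inequality \eqref{ineq: M<f<M+:2}, rearranged as $\Phi(x)\le\mathcal{L}_k(x)+\tfrac{3L_h^2}{2\sigma_k}$, shows $\Phi(x_{k,t})$ stays within the slack that is built into the definition of $\Omega_{x_1}$, i.e. $x_{k,t}\in\Omega_{x_1}$, and the outer induction advances with one more application of \eqref{inequ: Lk+1<Lk+c}.

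With the trajectory confined to $\Omega_{x_1}$, I would replace $L_k(x_{k,t})$ by the uniform bound $L_k=c_1\sigma_k+c_2$ in the displayed decrease, so that every inner iterate at which the stopping test \eqref{ineq: subproblem stop} fails reduces $\mathcal{L}_k$ by strictly more than $\varepsilon_k^2/(2L_k)$. Supposing \eqref{ineq: subproblem stop} failed for all $t=1,\dots,N$ with $N=\lceil 2L_k\Delta/\varepsilon_k^2\rceil$ and summing the one-step decrease over these indices would give $\mathcal{L}_k(x_{k,1})-\mathcal{L}_k(x_{k,N+1})>N\varepsilon_k^2/(2L_k)\ge\Delta$; on the other hand, the telescoped \eqref{inequ: Lk+1<Lk+c} gives $\mathcal{L}_k(x_{k,1})=\mathcal{L}_k(x_k)\le\mathcal{L}_1(x_1)+2L_h^2\Upsilon$ and \eqref{inequ: Lk>Phi*} gives $\mathcal{L}_k(x_{k,N+1})\ge\Phi^*-2L_h^2/\sigma_1$, so the left-hand side is at most $\mathcal{L}_1(x_1)+2L_h^2\Upsilon-\Phi^*+2L_h^2/\sigma_1=\Delta$ by the definition \eqref{equ: Delta} of $\Delta$ — a contradiction. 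Hence the test must trigger at some $t_k\le N=\lceil 2L_k\Delta/\varepsilon_k^2\rceil$ (and in particular $t_k<\infty$), which is the claim.

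The hard part will be the confinement step. Since \eqref{lemma l-smooth E ineq R} and the uniform constant $L_k$ only hold on $\Omega_{x_1}$, one cannot take for granted that the RGD iterates stay in that set, and the nested induction has to be arranged so that the within-stage monotonicity of $\mathcal{L}_k$, the controlled growth \eqref{inequ: Lk+1<Lk+c} across stages, and the summable slack $\Upsilon$ in the definition of $\Omega_{x_1}$ interlock. Everything after that — replacing $L_k(x_{k,t})$ by $L_k$ and telescoping the function-value decrease — is routine.
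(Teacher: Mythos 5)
Your counting argument coincides with the paper's: the same one-step decrease obtained by plugging $v=-\rgrad\mathcal{L}_k(x_{k,t})/L_k(x_{k,t})$ into \eqref{lemma l-smooth E ineq R}, the same replacement of $L_k(x_{k,t})$ by $L_k$ from \eqref{equ:Lk}, the same lower bound \eqref{inequ: Lk>Phi*} on the terminal value, and the same telescoping of \eqref{inequ: Lk+1<Lk+c} to get $\mathcal{L}_k(x_k)\le\mathcal{L}_1(x_1)+2L_h^2\Upsilon$, so that the total available decrease is exactly $\Delta$ of \eqref{equ: Delta}. Phrasing the bound $T_k\le\lceil 2L_k\Delta/\varepsilon_k^2\rceil$ as a contradiction rather than directly summing, as the paper does, is cosmetic.

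The one substantive difference is the confinement step, which the paper disposes of by citing an argument analogous to \cite[Proposition 4.1]{xu2024riemannian}, while you attempt to spell it out; there your arithmetic does not actually close. Your chain gives, for $x_{k,t}$ reached under the inductive hypothesis,
\begin{equation*}
\Phi(x_{k,t})\;\le\;\mathcal{L}_k(x_{k,t})+\frac{3L_h^2}{2\sigma_k}\;\le\;\Phi(x_1)+\frac{L_h^2}{\sigma_1}+2L_h^2\Upsilon+\frac{3L_h^2}{2\sigma_k},
\end{equation*}
whereas membership in $\Omega_{x_1}$ as defined in Assumption \ref{assumption: compact level set} requires $\Phi(x_{k,t})\le\Phi(x_1)+\Upsilon$ with $\Upsilon=\sum_{k\ge1}\sigma_k^{-1}$ carrying no $L_h^2$ factor. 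Already at $k=1$ this forces roughly $\tfrac{5}{2}L_h^2\le\tfrac{b}{b-1}$, which fails whenever $L_h$ is not small, so the claim that the drift "stays within the slack built into the definition of $\Omega_{x_1}$" is not justified as stated; the induction only closes if the level-set slack is enlarged to an $L_h^2$-dependent quantity (e.g.\ of order $2L_h^2\Upsilon+\tfrac{5L_h^2}{2\sigma_1}$) or an additional argument is supplied. To be fair, this mismatch is inherited from the paper, whose Assumption \ref{assumption: compact level set} and deferred citation leave the same constants unreconciled, but since you assert the interlocking explicitly, you should either adjust the slack you require of $\Omega_{x_1}$ or state the needed smallness condition on $L_h$.
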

\begin{proof}
	First, we show that if $x_{k,t}\in\Omega_{x_1}$, then
	\begin{equation}\label{inequ: bound rgrad}
		0\leq\frac{1}{2L_k(x_{k,t})}\|\rgrad\mathcal{L}_k(x_{k,t})\|^2\leq \mathcal{L}_k(x_{k,t})-\mathcal{L}_k(x_{k,t+1}).
	\end{equation}
	This follows directly by substituting $v=\rgrad\mathcal{L}_k(x_{k,t})/L_k(x_{k,t})$ into \eqref{lemma l-smooth E ineq R}. 
	Based on \eqref{inequ: L_k<Phi} and \eqref{inequ: Lk+1<Lk+c} and following the similar argument in \cite[Proposition 4.1]{xu2024riemannian}, we can further show that \eqref{inequ: bound rgrad} holds for any $k \geq 1$ and $t \geq 1$. Summing both sides of \eqref{inequ: bound rgrad} over $t = 1, 2, \dots, T_k$ gives  
	\begin{equation}
 \label{equ:sum:gradLk}
		\begin{aligned}
		\sum_{t=1}^{T_k}\frac{\|\rgrad\mathcal{L}(x_{k,t})\|^2}{2L_k(x_{k,t})}\leq \mathcal{L}_k(x_{k,1})-\mathcal{L}_k(x_{k,T_k+1}) 
  \overset{\text{(a)}}{\leq}\mathcal{L}_k(x_k)-\Phi^*+\frac{2L_h^2}{\sigma_1},
		\end{aligned}
	\end{equation}
        where (a) follows from \eqref{inequ: Lk>Phi*}.  
    From \eqref{equ:sum:gradLk}, we know that the inner RGD method must stop within a finite number of steps. With a slight abuse of notation, we assume that it stops at the $T_k$-th iteration. In other words, we have 
        \[
        \|\rgrad \mathcal{L}_k(x_{k,t})\| > \varepsilon_k,~t = 1, 2,\ldots, T_k - 1, \quad \|\rgrad \mathcal{L}_k(x_{k,T_k})\|\leq \varepsilon_k.
        \] 
        Using $L_k(x_k) \leq L_k$ as shown in \eqref{equ:Lk}, we can derive from \eqref{equ:sum:gradLk} that 
        \begin{equation}\label{equ:Tk:1st:estimate}
        T_k \leq \left\lceil \frac{2L_k(\mathcal{L}_k(x_k)-\Phi^*+{2L_h^2}/{\sigma_1})}{\varepsilon_k^2}\right\rceil. 
        \end{equation}

Next, we prove that $T_k\leq \lceil 2 L_k\Delta/\varepsilon_k^2 \rceil$. Since the inner RGD method stops at the $T_k$-th iteration, we have $x_{k+1} = x_{k, T_k}$. Using $x_{k} = x_{k,1}$ and \eqref{inequ: bound rgrad}, we find that $\mathcal{L}_{k}(x_{k+1}) \leq \mathcal{L}_{k}(x_k)$. This, together with \eqref{inequ: Lk+1<Lk+c}, implies that 
	\begin{equation*}
		\mathcal{L}_{k+1}(x_{k+1}) \leq \mathcal{L}_k(x_{k+1})+\frac{2L_h^2}{\sigma_{k}} \leq \mathcal{L}_k(x_{k})+\frac{2L_h^2}{\sigma_{k}},\quad\forall\,k\geq1.
	\end{equation*}
	Therefore, with \eqref{equ:Upsilon}, we obtain
	\begin{equation*}
		\mathcal{L}_{k+1}(x_{k+1})\leq\mathcal{L}_1(x_1)+2L_h^2\sum_{i=1}^{k}\frac{1}{\sigma_{i}}\leq\mathcal{L}_1(x_1)+2L_h^2\Upsilon,\quad\forall\,k\geq1.
	\end{equation*}
	Combining this with \eqref{equ: Delta} and \eqref{equ:Tk:1st:estimate} gives the desired result. 
\end{proof}

We are now ready to establish the first-order oracle complexity of Algorithm \ref{Algorithm RiAL} in finding an $\varepsilon$-stationary point of problem \eqref{prob:Riemannian nonsmooth composite}.
\begin{theo}\label{Th: I}
	Suppose that Assumptions \ref{assumption: compact level set}, \ref{assumption: bounded below}, and \ref{assumption1} hold. 
Then, for any given $\varepsilon>0$, Algorithm \ref{Algorithm RiAL} with ${\zeta_{k,t}=1/L_k(x_{k,t})}$ can find an $\varepsilon$-stationary point of problem \eqref{prob:Riemannian nonsmooth composite} with at most $\mathcal{O}(\varepsilon^{-3})$ first-order oracle calls.
\end{theo}
\begin{proof}
	First, we show that $x_{K+1}$ is an $\varepsilon$-stationary point of problem \eqref{prob:Riemannian nonsmooth composite} with 
	\begin{equation}\label{equ: K Option I}
			 	K := 1 + \left\lceil\log_{b}\left(\frac{\max\{2L_h\sigma_1^{-1},\,\varepsilon_1 \}}{ \varepsilon}\right)\right\rceil.
	\end{equation} 
	 By the updates of $z_{k+1}$ in \eqref{equ: update z} and $y_{k+1}$ in \eqref{equ: update y}, we have
	\begin{equation}\label{equ: z_k+1 proof}
		z_{k+1}=z_k+\sigma_k\left(\mathcal{A}(x_{k+1})-\prox_{h/\sigma_k}\left(\mathcal{A}(x_{k+1})+\frac{z_k}{\sigma_k}\right)\right).
	\end{equation}
	It follows from  \eqref{equ: gradient of L_k}, \eqref{equ: z_k+1 proof}, and the choice of $\varepsilon_k$ in \eqref{equ: sigma_k and epsilon_k} that
	\begin{equation}\label{ineq:x-epsilonk}
		\begin{aligned}
&\left\|\proj_{\mathrm{T}_{x_{k+1}}\mathcal{M}}\left(\nabla f(x_{k+1}) +\nabla \mathcal{A}(x_{k+1})^\top z_{k+1} \right) \right\|\\
=&\left\|\proj_{\mathrm{T}_{x_{k+1}}\mathcal{M}}\left(\nabla\mathcal{L}_k(x_{k+1})\right)\right\|	=\left\|\rgrad\mathcal{L}_k(x_{k+1})\right\|\overset{\text{(a)}}{\leq}\varepsilon_k \overset{(b)}{=}\frac{\varepsilon_1}{b^{k-1}},
	\end{aligned}
	\end{equation}
	where (a) follows from \eqref{ineq: subproblem stop} and $x_{k+1} = x_{k, t_k}$ and (b) is due to the update of $\varepsilon_k$ in \eqref{equ: sigma_k and epsilon_k}.
From \eqref{equ:new} and the update of $z_{k+1}$ in \eqref{equ: update z}, we have the following bound on $\|\mathcal{A}(x_{k+1}) - y_{k+1}\|$:   
\begin{equation}\label{ineq:feas epsilon}
	\begin{aligned}
		\|\mathcal{A}(x_{k+1}) - y_{k+1}\| & 
		=  \frac{\|z_{k+1} - z_k\|}{\sigma_k}  
		\leq \frac{\|z_{k+1} \|+\|z_k\|}{\sigma_k}
		  \leq \frac{2L_h}{\sigma_k}=\frac{2L_h}{\sigma_1 b^{k - 1}},
	\end{aligned}
\end{equation}
        where the last equality is due to the update of $\sigma_k$ in \eqref{equ: sigma_k and epsilon_k}.
Combining \eqref{ineq:x-epsilonk}, \eqref{eq:y-epsilonk}, and  \eqref{ineq:feas epsilon}, and the definition of $K$ in \eqref{equ: K Option I}, we conclude that $x_{K+1}$ is an $\varepsilon$-stationary point of problem \eqref{prob:Riemannian nonsmooth composite}.

Next, for the inner iteration complexity, by Lemma \ref{lem: inner complexity}, the RGD method requires at most $\left\lceil 2L_k\Delta\varepsilon_k^{-2}\right\rceil$ iterations to return a point $x_{k, t_k}$ that satisfies the stopping condition \eqref{ineq: subproblem stop}. 
Combining the inner and outer iteration complexity, the total number of RGD updates can be bounded by
\begin{equation*}
	\begin{aligned}
		 \sum_{k = 1}^K \left(\frac{2 L_k\Delta}{\varepsilon_k^2}+1 \right)
		={}&K+ 2\Delta\sum_{k = 1}^K \frac{c_1 \sigma_k + c_2}{\varepsilon_k^2} \\
		={}&K+ \frac{2\Delta}{\varepsilon_1^2b^2}\left(\frac{c_1\sigma_1}{b}\sum_{k = 1}^Kb^{3k}+c_2\sum_{k = 1}^Kb^{2k}\right)\\
		\leq{}& K+\frac{2\Delta}{\varepsilon_1^2b^2}\left(\frac{c_1\sigma_1 b^2}{b^{3}-1}b^{3K}+\frac{c_2 b^2}{b^2-1}b^{2K}\right)\\
		\overset{\text{(a)}}{\leq{}}&K+\frac{2\Delta(c_1 \sigma_1 + c_2)}{\varepsilon_1^2(b^2 -1 )} b^{3K}\\
        \overset{\text{(b)}}{=}{}& \mathcal{O}(\varepsilon^{-3}),
	\end{aligned}
\end{equation*}
where (a) uses $b > 1$ and (b) follows from \eqref{equ: K Option I}. 
This completes the proof.
\end{proof}

Theorem \ref{Th: I} shows that the first-order oracle complexity of Algorithm \ref{Algorithm RiAL} for finding an $\varepsilon$-stationary point of problem \eqref{prob:Riemannian nonsmooth composite} is $\mathcal{O}(\varepsilon^{-3})$, which matches the best-known complexity results for problem \eqref{prob:Riemannian nonsmooth composite} in \cite{deng2024oracle,beck2023dynamic}. 
It is worth noting that while the dual update in RiAL follows a classical approach, the  ManIAL method in \cite{deng2024oracle} computes $z_{k+1}$ using a damped stepsize as follows:
\begin{equation}\label{equ:damped dual stepsize}
	z_{k+1} = z_k+\beta_0\min\left(\frac{\|\mathcal{A}(x_1 )- y_1\|\log^22}{\|\mathcal{A}(x_{k+1})-y_{k+1}\|(k+1)^2\log(k+2)},1\right)(\mathcal{A}(x_{k+1})-y_{k+1}).
\end{equation}
Here, $\beta_0>0$ is a preset constant. Our result demonstrates that this damped dual stepsize in \eqref{equ:damped dual stepsize}, as well as the  additional projection onto a compact set required in the MIAL method \cite{deng2023manifold} in order to bound the dual variable, seems unnecessary.
Moreover, our results work for a general nonlinear mapping $\mathcal{A}$, whereas the results in \cite{beck2023dynamic,deng2024oracle} apply only to the case where $\mathcal{A}$ is a linear mapping. Finally, our analysis requires that the level set $\Omega_{x_1}$ is bounded (i.e., Assumption \ref{assumption: compact level set}), which is much weaker than the boundedness assumptions on the manifold $\mathcal{M}$ made in \cite{deng2023manifold,deng2024oracle}. 
\section{Numerical Results}\label{sec: numerical results} 
In this section, we report the numerical results of RiAL-RGD and ManIAL \cite{deng2024oracle} for solving sparse PCA and sparse CCA problems. 
All tests are implemented in MATLAB 2023b and evaluated on Apple M2 Pro CPU. In all tests, both algorithms are terminated when they return an $\varepsilon$-stationary point or hit the maximum outer iteration number 100. The maximum number of inner iterations for RGD when solving each subproblem is set to 5000. We set $\varepsilon=10^{-5},\varepsilon_1 = \sigma_1= b=1.5$.
To enhance the performance of the inner RGD method, we utilize the Riemannian Barzilai-Borwein stepsize \cite{barzilai1988two,wen2013feasible,jiang2015framework,iannazzo2018riemannian,jiang2022riemannian} as the initial stepsize and perform a backtracking line search to find a suitable stepsize. 
The key difference between RiAL-RGD and ManIAL lies in the stepsize used to update the dual variable; see \eqref{equ: update z} and \eqref{equ:damped dual stepsize}. For ManIAL, the damped dual stepsize in \eqref{equ:damped dual stepsize} with $\beta_0=1$ is used. 
\subsection{Results on Sparse PCA}
Given a data matrix $A\in\mathbb{R}^{d\times N}$, where each of the $N$ columns corresponds to a data sample with $d$ attributes, the sparse PCA problem can be mathematically formulated as \cite{zou2018selective}
\begin{equation}\label{prob: SPCA}
	\min_{X\in\mathcal{S}(d,r)} \left\{\Phi(X):=-\langle AA^\top,\,XX^\top\rangle+\mu\|X\|_1\right\}.
\end{equation}
Here, 
$ \mathcal{S}(d,r) = \{X \in \mathbb{R}^{d \times r}\mid X^\top X = I_r\} $ is the Stiefel manifold with $I_r$ being the $r$-by-$r$ identity matrix, $\mu>0$ is the weighting parameter, and $\|X\|_1=\sum_{i,j}|X_{ij}|$ is the $\ell_1$-norm of the matrix $X$. 
In our tests, we randomly generate the data matrix $A$ as described in \cite{zhou2023semismooth}. 
\begin{table}
	\centering
	\fontsize{8pt}{\baselineskip}\selectfont
	\caption{Average performance comparison on Sparse PCA.}
	\tabcolsep=0.15cm
	\renewcommand\arraystretch{1.2}
	\begin{tabular}{cccccc|ccccc}
		\hline
		 & \multicolumn{5}{c}{ManIAL \cite{deng2024oracle}} & \multicolumn{5}{c}{RiAL-RGD}  \\
		\cline{2-11}
		 & $-\Phi$ &  spar & cpu & outer & total & $-\Phi$ & spar & cpu & outer & total  \\
		\hline
		$\mu$ & \multicolumn{10}{c}{ $d=500,\, N=50,\,r=10$}\\
		\hline
		0.5 & $410.3$ & $31.2$ & $5.9$ & $34$ & $31658$ & $410.4$ &  $31.6$ & $\mathbf{2.9}$ & $\mathbf{22}$ & $\mathbf{18725}$  \\ 
		0.75 & $377.0$ & $39.1$ & $7.0$ & $36$ & $37236$ & $377.2$ &  $39.3$ & $\mathbf{3.4}$ & $\mathbf{24}$ & $\mathbf{22286}$  \\ 
		1 & $346.3$ & $45.5$ & $13.3$ & $43$ & $81793$ & $346.5$ & $45.7$ & $\mathbf{4.3}$ & $\mathbf{24}$ & $\mathbf{27344}$  \\ 
		1.25 & $318.0$ & $50.8$ & $8.9$ & $37$ & $50867$ & $318.5$ & $51.2$ & $\mathbf{3.5}$ & $\mathbf{24}$ & $\mathbf{22572}$  \\ 
		1.5 & $292.2$ & $55.1$ & $9.2$ & $37$ & $49510$ & $292.3$ & $55.4$ & $\mathbf{3.3}$ & $\mathbf{25}$ & $\mathbf{21563}$  \\ 
		\hline
		$r$ & \multicolumn{10}{c}{ $d=1000,\,\, N=50,\,\mu=5$}\\
		\hline        
4 & $327.2$  & $41.6$ & $13.2$ & $40$ & $53399$& $327.5$ & $41.7$ & $\mathbf{3.5}$ & $\mathbf{28}$ & $\mathbf{20950}$  \\ 
5 & $341.5$ & $56.6$ & $20.1$ & $40$ & $53001$ & $342.2$ & $56.7$ & $\mathbf{3.5}$ & $\mathbf{27}$ & $\mathbf{14891}$  \\ 
6 & $324.7$ & $58.8$ & $22.2$ & $40$ & $52699$ & $325.0$ & $58.9$ & $\mathbf{5.8}$ & $\mathbf{28}$ & $\mathbf{23586}$  \\ 
7 & $362.0$ & $67.4$ & $24.3$ & $40$ & $50866$ & $364.2$ & $67.8$ & $\mathbf{5.9}$ & $\mathbf{27}$ & $\mathbf{20328}$  \\ 
8 & $315.9$ & $70.9$ & $26.2$ & $40$ & $52356$ & $316.2$ & $71.1$ & $\mathbf{5.0}$ & $\mathbf{28}$ & $\mathbf{17899}$  \\ 
	\hline
	\end{tabular}
	\label{Tab:  SPCA  aver10}
\end{table}

The average results over 20 runs  with different randomly generated data matrices and initial points are presented in Table \ref{Tab:  SPCA aver10}. In this table, $\Phi$ is the objective value of problem \eqref{prob: SPCA}, “cpu" represents the cpu time in seconds, “outer" denotes the number of outer iterations, and “total" denotes the total number of Riemannian gradient descent steps. 
We also compare the the sparsity of $X$ (denoted by “spar''),
which is defined as the percentage of entries with the absolute value less than $10^{-5}$. 
From Table \ref{Tab:  SPCA  aver10}, we observe that compared with ManIAL, RiAL-RGD can generally find higher-quality solutions in terms of both the value of $\Phi$ and the sparsity. Moreover, RiAL is significantly faster than ManIAL, requiring much fewer outer and total iterations. The higher efficiency of RiAL-RGD mainly benefits from its use of the  classical full stepsize for the dual update, as opposed to the damped stepsize employed by ManIAL. 
\subsection{Sparse CCA}
Given two data matrices $A\in\mathbb{R}^{d\times p}$ and $B\in\mathbb{R}^{d\times q}$, let $\hat{\Sigma}_{aa}=\frac{1}{d} A^{\top}A$ and $ \hat{\Sigma}_{bb}=\frac{1}{d} B^{\top}B$ be the sample covariance matrices of $X$ and $Y$, respectively, and $\hat{\Sigma}_{ab}=\frac{1}{d} A^{\top}B$ be the sample cross-covariance matrix. The sparse CCA can be formulated as \cite{deng2024oracle,chen2019alternating}
\begin{equation}\label{pro:scca}
	\min_{U\in \mathcal{S}_{\Sigma_{aa}}(p,r), V\in \mathcal{S}_{\Sigma_{bb}}(p,r)} \left\{\Phi(U,V):=-\mathrm{tr}(U^\top \hat{\Sigma}_{ab} V) + \mu_1 \|U\|_1 + \mu_2 \|V\|_1\right\}. 
\end{equation}
Here, $\mathcal{S}_{G}(p,r)=\{X\in\mathbb{R}^{p\times r}\mid X^\top GX=I_r\}$ with $G\in\mathbb{R}^{p\times p}$ being a positive definite matrix is the generalized Stiefel manifold and $\mu_1, \mu_2 > 0$ are the weighting parameters. In our tests, the data matrices are randomly generated as in \cite{deng2024oracle} and we set $\mu_1=\mu_2=\mu$.
\begin{table}
	\centering
	\fontsize{8pt}{\baselineskip}\selectfont
	\caption{Average performance comparison on Sparse CCA.}
	\tabcolsep=0.1cm
	\renewcommand\arraystretch{1.2}
	\begin{tabular}{ccccccc|cccccc}
		\hline
		& \multicolumn{6}{c}{ManIAL \cite{deng2024oracle}} & \multicolumn{6}{c}{RiAL-RGD}  \\
		\cline{2-13}
		& $-\Phi$ & sparu & sparv & cpu & outer & total & $-\Phi$ & sparu & sparv & cpu & outer & total  \\
		\hline
		$\mu$ & \multicolumn{12}{c}{$r=5,\,d=1000,\,p=q=200$}\\
		\hline
0.05 & $3.011$ & $32.7$ & $33.0$ & $30.7$ & $28$ & $53253$ & $3.013$ & $32.7$ & $33.7$ & $\mathbf{14.7}$ & $\mathbf{18}$ & $\mathbf{26941}$  \\ 
0.07 & $2.448$ & $42.6$ & $42.2$ & $49.4$ & $36$ & $98864$ & $2.459$ & $44.0$ & $43.8$ & $\mathbf{14.7}$ & $\mathbf{18}$ & $\mathbf{27098}$  \\ 
0.10 & $1.700$ & $51.1$ & $50.9$ & $72.7$ & $51$ & $169284$ & $1.745$ & $58.9$ & $58.5$ & $\mathbf{14.6}$ & $\mathbf{19}$ & $\mathbf{27566}$  \\ 
0.12 & $1.313$ & $54.4$ & $54.8$ & $101.2$ & $66$ & $246947$ & $1.363$ & $67.1$ & $67.1$ & $\mathbf{14.0}$ & $\mathbf{20}$ & $\mathbf{26600}$  \\ 
0.15 & $0.822$ & $66.5$ & $66.7$ & $81.4$ & $59$ & $200863$ & $0.826$ & $78.7$ & $78.4$ & $\mathbf{20.2}$ & $\mathbf{21}$ & $\mathbf{37009}$  \\ 
		\hline
		$r$ & \multicolumn{12}{c}{$\mu=0.05,\,d=1000,\,p=q=200$}\\
		\hline        
		2 & $1.252$ & $30.7$ & $31.1$ & $15.2$ & $28$ & $44210$ & $1.256$ & $31.0$ & $31.1$ & $\mathbf{3.5}$ & $\mathbf{18}$ & $\mathbf{10904}$  \\ 
3 & $1.816$ & $31.1$ & $31.2$ & $49.9$ & $40$ & $114272$ & $1.821$ & $31.9$ & $31.8$ & $\mathbf{9.2}$ & $\mathbf{18}$ & $\mathbf{18601}$  \\ 
4 & $2.410$ & $32.7$ & $32.2$ & $22.2$ & $28$ & $55946$ & $2.418$ & $32.9$ & $32.6$ & $\mathbf{7.2}$ & $\mathbf{18}$ & $\mathbf{19530}$  \\ 
5 & $3.011$ & $32.7$ & $33.0$ & $31.4$ & $28$ & $53253$ & $3.013$ & $32.7$ & $33.7$ & $\mathbf{15.0}$ & $\mathbf{18}$ & $\mathbf{26941}$  \\ 
6 & $3.600$ & $32.4$ & $32.9$ & $52.4$ & $36$ & $100133$ & $3.611$ & $33.2$ & $33.6$ & $\mathbf{16.3}$ & $\mathbf{17}$ & $\mathbf{28998}$  \\
		\hline
	\end{tabular}
	\label{Tab:  SCCA  aver10}
\end{table}

The average results over 20 runs  with different randomly generated data matrices and initial points are presented in Table \ref{Tab:  SCCA aver10}, where “sparu” and “sparv” denote the sparsity of $U$ and $V$, respectively. We can observe from Table \ref{Tab:  SCCA  aver10} that RiAL-RGD always return better solutions in terms of both the value of $\Phi$ in \eqref{pro:scca} and the sparsity of $U$ and $V$. 
Moreover, RiAL-RGD is much more efficient than ManIAL, requiring less CPU time and fewer outer and total iterations. These results again validate the necessity of using the classical full dual stepsize in Riemannian AL methods.

\section{Concluding Remarks}
In this paper, we established the first-order oracle complexity of a Riemannian AL method called RiAL-RGD which utilizes the classical dual update for solving the Riemannian nonsmooth composite problem in \eqref{prob:Riemannian nonsmooth composite}. We proved that RiAL-RGD can find an $\varepsilon$-stationary point of the considered problem with at most $\mathcal{O}(\varepsilon^{-3})$ calls to the first-order oracle, thereby achieving the best-known oracle complexity. Numerical results on sparse PCA and sparse CCA validate the superiority of RiAL compared to an existing Riemannian AL method in \cite{deng2024oracle}. 
~\\[6pt]
\textbf{Declarations}\\[4pt]
\begin{small}
\noindent \textbf{Conflict of interest} 
The authors declare that they have no conflict of interest to this work. \\[4pt]
\noindent \textbf{Data availability} The datasets are generated randomly, with details and citations provided in the corresponding sections.
\end{small}

\bibliographystyle{plain}
\bibliography{reference_arv}

\begin{thebibliography}{10}

\bibitem{absil2008optimization}
P.-A. Absil, R.~Mahony, and R.~Sepulchre.
\newblock {\em Optimization Algorithms on Matrix Manifolds}.
\newblock Princeton Univ. Press, 2008.

\bibitem{barzilai1988two}
J.~Barzilai and J.~M. Borwein.
\newblock Two-point step size gradient methods.
\newblock {\em IMA J. Numer. Anal.}, 8(1):141--148, 1988.

\bibitem{beck2017first}
A.~Beck.
\newblock {\em First-{O}rder {M}ethods in {O}ptimization}.
\newblock SIAM, 2017.

\bibitem{beck2023dynamic}
A.~Beck and I.~Rosset.
\newblock A dynamic smoothing technique for a class of nonsmooth optimization
  problems on manifolds.
\newblock {\em SIAM J. Optim.}, 33(3):1473--1493, 2023.

\bibitem{borckmans2014riemannian}
P.~B. Borckmans, S.~E. Selvan, N.~Boumal, and P.-A. Absil.
\newblock A {R}iemannian subgradient algorithm for economic dispatch with
  valve-point effect.
\newblock {\em J. Comput. Appl. Math.}, 255:848--866, 2014.

\bibitem{boumal2023introduction}
N.~Boumal.
\newblock {\em An Introduction to Optimization on Smooth Manifolds}.
\newblock Cambridge Univ. Press, 2023.

\bibitem{boumal2019global}
N.~Boumal, P.-A. Absil, and C.~Cartis.
\newblock Global rates of convergence for nonconvex optimization on manifolds.
\newblock {\em IMA J. Numer. Anal.}, 39(1):1--33, 2019.

\bibitem{chen2020proximal}
S.~Chen, S.~Ma, A.~M.-C. So, and T.~Zhang.
\newblock Proximal gradient method for nonsmooth optimization over the
  {S}tiefel manifold.
\newblock {\em SIAM J. Optim.}, 30(1):210--239, 2020.

\bibitem{chen2024nonsmooth}
S.~Chen, S.~Ma, A.~M.-C. So, and T.~Zhang.
\newblock Nonsmooth optimization over the {S}tiefel manifold and beyond:
  Proximal gradient method and recent variants.
\newblock {\em SIAM Rev.}, 66(2):319--352, 2024.

\bibitem{chen2019alternating}
S.~Chen, S.~Ma, L.~Xue, and H.~Zou.
\newblock An alternating manifold proximal gradient method for sparse principal
  component analysis and sparse canonical correlation analysis.
\newblock {\em INFORMS J. Optim.}, 2(3):192--208, 2020.

\bibitem{dahal2023damped}
H.~Dahal, W.~Liu, and Y.~Xu.
\newblock Damped proximal augmented {L}agrangian method for weakly-convex
  problems with convex constraints.
\newblock {\em arXiv preprint arXiv:2311.09065}, 2023.

\bibitem{deng2024oracle}
K.~Deng, J.~Hu, J.~Wu, and Z.~Wen.
\newblock Oracle complexities of augmented {L}agrangian methods for nonsmooth
  manifold optimization.
\newblock {\em arXiv Preprint arXiv:2404.05121}, 2024.

\bibitem{deng2023manifold}
K.~Deng and Z.~Peng.
\newblock A manifold inexact augmented {L}agrangian method for nonsmooth
  optimization on {R}iemannian submanifolds in {E}uclidean space.
\newblock {\em IMA J. Numer. Anal.}, 43(3):1653--1684, 2023.

\bibitem{hardoon2011sparse}
D.~R. Hardoon and J.~Shawe-Taylor.
\newblock Sparse canonical correlation analysis.
\newblock {\em Mach. Learn.}, 83:331--353, 2011.

\bibitem{hestenes1969multiplier}
M.~R. Hestenes.
\newblock Multiplier and gradient methods.
\newblock {\em J. Optim. Theory Appl.}, 4(5):303--320, 1969.

\bibitem{hosseini2018line}
S.~Hosseini, W.~Huang, and R.~Yousefpour.
\newblock Line search algorithms for locally {L}ipschitz functions on
  {R}iemannian manifolds.
\newblock {\em SIAM J. Optim.}, 28(1):596--619, 2018.

\bibitem{hosseini2017riemannian}
S.~Hosseini and A.~Uschmajew.
\newblock A {R}iemannian gradient sampling algorithm for nonsmooth optimization
  on manifolds.
\newblock {\em SIAM J. Optim.}, 27(1):173--189, 2017.

\bibitem{hu2023constraint}
X.~Hu, N.~Xiao, X.~Liu, and K.-C. Toh.
\newblock A constraint dissolving approach for nonsmooth optimization over the
  {S}tiefel manifold.
\newblock {\em IMA J. Numer. Anal.}, 2023.
\newblock doi:{\color{blue}}
  \href{https://doi.org/10.1093/imanum/drad098}{10.1093/imanum/drad098}.

\bibitem{huang2022riemannian}
W.~Huang and K.~Wei.
\newblock Riemannian proximal gradient methods.
\newblock {\em Math. Program.}, 194(1):371--413, 2022.

\bibitem{huang2023inexact}
W.~Huang and K.~Wei.
\newblock An inexact {R}iemannian proximal gradient method.
\newblock {\em Comput. Optim. Appl.}, 85(1):1--32, 2023.

\bibitem{iannazzo2018riemannian}
B.~Iannazzo and M.~Porcelli.
\newblock The {R}iemannian {B}arzilai-{B}orwein method with nonmonotone line
  search and the matrix geometric mean computation.
\newblock {\em IMA J. Numer. Anal.}, 38(1):495--517, 2018.

\bibitem{jiang2015framework}
B.~Jiang and Y.-H. Dai.
\newblock A framework of constraint preserving update schemes for optimization
  on {S}tiefel manifold.
\newblock {\em Math. Program.}, 153(2):535--575, 2015.

\bibitem{jiang2022riemannian}
B.~Jiang and Y.-F. Liu.
\newblock A {R}iemannian exponential augmented {L}agrangian method for
  computing the projection robust {W}asserstein distance.
\newblock In {\em Proc. Adv. Neural Inf. Process. Syst.}, volume~36, pages
  79999--80023, 2023.

\bibitem{jolliffe2003modified}
I.~T. Jolliffe, N.~T. Trendafilov, and M.~Uddin.
\newblock A modified principal component technique based on the lasso.
\newblock {\em J. Comput. Graph. Stat.}, 12(3):531--547, 2003.

\bibitem{kong2023iteration}
W.~Kong, J.~G. Melo, and R.~D.~C. Monteiro.
\newblock Iteration complexity of an inner accelerated inexact proximal
  augmented {L}agrangian method based on the classical {L}agrangian function.
\newblock {\em SIAM J. Optim.}, 33(1):181--210, 2023.

\bibitem{kovnatsky2016madmm}
A.~Kovnatsky, K.~Glashoff, and M.~M. Bronstein.
\newblock {MADMM}: a generic algorithm for non-smooth optimization on
  manifolds.
\newblock In {\em Proc. Comput. Vis. ECCV}, pages 680--696. Springer, 2016.

\bibitem{lai2014splitting}
R.~Lai and S.~Osher.
\newblock A splitting method for orthogonality constrained problems.
\newblock {\em J. Sci. Comput.}, 58:431--449, 2014.

\bibitem{li2023riemannian}
J.~Li, S.~Ma, and T.~Srivastava.
\newblock A {R}iemannian {ADMM}.
\newblock {\em arXiv preprint arXiv:2211.02163}, 2023.

\bibitem{li2021weakly}
X.~Li, S.~Chen, Z.~Deng, Q.~Qu, Z.~Zhu, and A.~M.-C. So.
\newblock Weakly convex optimization over {S}tiefel manifold using {R}iemannian
  subgradient-type methods.
\newblock {\em SIAM J. Optim.}, 31(3):1605--1634, 2021.

\bibitem{li2021rate}
Z.~Li, P.-Y. Chen, S.~Liu, S.~Lu, and Y.~Xu.
\newblock Rate-improved inexact augmented {L}agrangian method for constrained
  nonconvex optimization.
\newblock In {\em Proc. Int. Conf. Artif. Intell. Stat.}, pages 2170--2178.
  PMLR, 2021.

\bibitem{liu2024survey}
Y.-F. Liu, T.-H. Chang, M.~Hong, Z.~Wu, A.~M.-C. So, E.~A. Jorswieck, and
  W.~Yu.
\newblock A survey of recent advances in optimization methods for wireless
  communications.
\newblock {\em IEEE J. Sel. Areas Commun.}, 2024.
\newblock doi:{\color{blue}}
  \href{https://doi.org/10.1109/JSAC.2024.3443759}{10.1109/JSAC.2024.3443759}.

\bibitem{nesterov2018lectures}
Y.~Nesterov.
\newblock {\em Lectures on Convex Optimization}, volume 137.
\newblock Springer, 2018.

\bibitem{peng2023riemannian}
Z.~Peng, W.~Wu, J.~Hu, and K.~Deng.
\newblock Riemannian smoothing gradient type algorithms for nonsmooth
  optimization problem on compact {R}iemannian submanifold embedded in
  {E}uclidean space.
\newblock {\em Appl. Math. Optim.}, 88(3):85, 2023.

\bibitem{powell1969method}
M.~J.~D. Powell.
\newblock A method for nonlinear constraints in minimization problems.
\newblock {\em Optim.}, pages 283--298, 1969.

\bibitem{rockafellar2009variational}
R.~T. Rockafellar and R.~J.-B. Wets.
\newblock {\em Variational Analysis}, volume 317.
\newblock Springer Sci. Business Media, 2009.

\bibitem{sahin2019inexact}
M.~F. Sahin, A.~Alacaoglu, F.~Latorre, and V.~Cevher.
\newblock An inexact augmented {L}agrangian framework for nonconvex
  optimization with nonlinear constraints.
\newblock In {\em Proc. Adv. Neural Inf. Process. Syst.}, volume~32, pages
  13966--13978, 2019.

\bibitem{samadi2018price}
S.~Samadi, U.~Tantipongpipat, J.~H. Morgenstern, M.~Singh, and S.~Vempala.
\newblock The price of fair {PCA}: One extra dimension.
\newblock In {\em Proc. Adv. Neural Inf. Process. Syst.}, volume~31, pages
  10999--11010, 2018.

\bibitem{wang2022manifold}
Z.~Wang, B.~Liu, S.~Chen, S.~Ma, L.~Xue, and H.~Zhao.
\newblock A manifold proximal linear method for sparse spectral clustering with
  application to single-cell {RNA} sequencing data analysis.
\newblock {\em INFORMS J. Optim.}, 4(2):200--214, 2022.

\bibitem{wen2013feasible}
Z.~Wen and W.~Yin.
\newblock A feasible method for optimization with orthogonality constraints.
\newblock {\em Math. Program.}, 142(1):397--434, 2013.

\bibitem{xu2024riemannian}
M.~Xu, B.~Jiang, Y.-F. Liu, and A.~M.-C. So.
\newblock A {R}iemannian alternating descent ascent algorithmic framework for
  nonconvex-linear minimax problems on {R}iemannian manifolds.
\newblock {\em https://arxiv.org/abs/2409.19588}, 2024.

\bibitem{xu2023efficient2}
M.~Xu, B.~Jiang, W.~Pu, Y.-F. Liu, and A.~M.-C. So.
\newblock An efficient alternating {R}iemannian/projected gradient descent
  ascent algorithm for fair principal component analysis.
\newblock In {\em Proc. IEEE Int. Conf. Acoust., Speech, Signal Process.},
  pages 7195--7199, 2024.

\bibitem{zalcberg2021fair}
G.~Zalcberg and A.~Wiesel.
\newblock Fair principal component analysis and filter design.
\newblock {\em IEEE Trans. Signal Process.}, 69:4835--4842, 2021.

\bibitem{zhang2023riemannian}
C.~Zhang, X.~Chen, and S.~Ma.
\newblock A {R}iemannian smoothing steepest descent method for non-{L}ipschitz
  optimization on embedded submanifolds of {$\mathbb{R}^n$}.
\newblock {\em Math. Oper. Res.}, 49(3):1710--1733, 2023.

\bibitem{zhou2023semismooth}
Y.~Zhou, C.~Bao, C.~Ding, and J.~Zhu.
\newblock A semismooth {N}ewton based augmented {L}agrangian method for
  nonsmooth optimization on matrix manifolds.
\newblock {\em Math. Program.}, 201(1):1--61, 2023.

\bibitem{zou2018selective}
H.~Zou and L.~Xue.
\newblock A selective overview of sparse principal component analysis.
\newblock {\em Proc. IEEE}, 106(8):1311--1320, 2018.

\end{thebibliography}






\end{document}